\theoremstyle{plain}
\newtheorem{theorem}{Theorem}[section]
\newtheorem*{theorem*}{Theorem}
\newtheorem{lemma}[theorem]{Lemma}
\theoremstyle{definition}
\theoremstyle{remark}
\newtheorem{remark}[theorem]{Remark}
\newcommand{\ZZ}{\mathbb{Z}}
\newcommand{\Z}{\mathbb{Z}}
\newcommand{\HH}{\mathbb{H}}
\renewcommand{\ge}{\geqslant}
\renewcommand{\le}{\leqslant}
\DeclareMathOperator{\Ker}{Ker}
\DeclareMathOperator{\Isom}{Isom}
\DeclareMathOperator{\tors}{tors}
\DeclareMathOperator{\vol}{vol}
\newcommand{\Ofive}{\frac{\sqrt{5} + 1}{2}}
\DeclareMathOperator{\Stab}{Stab}
\title[Hyperbolic manifolds with exponential homology torsion growth]{Family of hyperbolic manifolds with exponential homology torsion growth}
\author{Stepan Alexandrov}
\address{\parbox{\linewidth}{Max Planck Institute for Mathematics, Bonn, Germany \\ Mathematical Institute, University of Bonn, Germany}}
\email{cyanprism@gmail.com}
\urladdr{cyanprism.github.io}
\subjclass[2020]{22E40, 57R19}
\begin{document}

\begin{abstract}
	In this note, we construct a family of hyperbolic manifolds with exponentially growing torsion in their homology groups. This demonstrates that the recent bound on homological torsion, established by Bader, Gelander, and Sauer, is asymptotically sharp and cannot be improved.
\end{abstract}

\maketitle

\section{Introduction}

The study of torsion in the homology of arithmetic hyperbolic manifolds has gained significant attention in recent years. In particular, it is known that in many settings, the torsion part of homology can grow exponentially in towers of finite covers. The results of Bergeron--Venkatesh (\cite{BV13}), Raimbault (\cite{Rai12}), and others provide asymptotic lower bounds for torsion in the homology of arithmetic manifolds under strong assumptions.

In this note, we focus on a recent result of Bader--Gelander--Sauer.

\begin{theorem}[\cite{BGS20}]
	For every \(n \ne 3\), there exists \(C = C_n > 0\) such that for every complete \(n\)-dimensional Riemannian manifold \(M\) of normalised bounded negative curvature and for every degree \(i,\) \[\log |\tors H_i(M; \mathbb{Z})| \leq C \cdot \vol(M).\]
\end{theorem}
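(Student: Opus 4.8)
The plan is to bound the torsion of $H_i(M;\ZZ)$ by the combinatorial size of a finite cell complex modelling the homotopy type of $M$, and then to bound that size linearly in $\vol(M)$ using comparison geometry. There is nothing to prove if $\vol(M)=\infty$, so I would assume $M$ is a complete finite-volume Riemannian $n$-manifold with pinched negative sectional curvature in $[-b^{2},-a^{2}]$, normalised as in the statement (say $b=1$); in particular $M$ has finitely many ends, each a cusp.

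The algebraic input is elementary. Suppose $M$ is homotopy equivalent to a finite simplicial complex $K$ with at most $N$ simplices. Then $H_i(M;\ZZ)=H_i(K;\ZZ)$ is computed from the simplicial chain complex $C_\bullet(K)$, a complex of free abelian groups with $\operatorname{rk}C_i(K)\le N$ and with boundary matrices whose columns each have at most $n+1$ nonzero entries, all equal to $\pm1$. The order of $\tors H_i(K;\ZZ)$ equals the top determinantal divisor of $\partial_{i+1}$, hence divides any nonzero maximal minor of that matrix, and Hadamard's inequality bounds such a minor by $(\sqrt{n+1}\,)^{\operatorname{rk}\partial_{i+1}}\le(\sqrt{n+1}\,)^{N}$. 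Thus $\log|\tors H_i(M;\ZZ)|\le\tfrac12(\log(n+1))\,N$, and the whole problem reduces to producing such a $K$ with $N\le C_n\vol(M)$.

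For the geometric input I would use the thick--thin decomposition from the Margulis lemma: $M=M_{\ge\varepsilon}\cup M_{<\varepsilon}$ for a Margulis constant $\varepsilon=\varepsilon_n>0$, where each component of $M_{<\varepsilon}$ is a tube around a short closed geodesic or a cusp neighbourhood. On the thick part the injectivity radius is $\ge\varepsilon/2$, so a maximal $(\varepsilon/4)$-separated set has at most $\vol(M)/v_n$ points (with $v_n$ a lower volume bound for an $(\varepsilon/8)$-ball, from the curvature upper bound), and the cover by $(\varepsilon/2)$-balls is good (balls and their intersections are convex in nonpositive curvature) with multiplicity bounded by a dimensional constant (a Bishop--Gromov packing estimate using the curvature lower bound); its nerve is then a simplicial complex with $\le C_n\vol(M)$ simplices homotopy equivalent to $M_{\ge\varepsilon}$. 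Each thin component retracts onto its core: a tube onto a circle (a $D^{n-1}$-bundle over $S^{1}$), a cusp onto its $(n-1)$-dimensional infranilmanifold cross-section, whose volume is $\le C_n\vol(M)$ and which, for fixed $n$, ranges over a set admitting uniformly bounded models. I would then reassemble with Mayer--Vietoris, checking that each gluing occurs along a subcomplex of controlled size so the bound on $N$ survives. (Alternatively, this entire bookkeeping is packaged in Gelander's theorem that a complete finite-volume manifold of pinched negative curvature is homotopy equivalent to a simplicial complex with at most $C_n\vol(M)$ simplices.)

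The main obstacle, and the reason dimension $3$ must be excluded, lies precisely in the thin tubes. When $n\ne 3$ the cross-section of a Margulis tube is $S^{n-2}$, which is simply connected for $n\ge 4$ and empty for $n=2$, so reattaching a tube is homologically innocuous and does not spoil the bound on $N$. When $n=3$ the cross-section is a $2$-torus and the reattachment is a Dehn filling, which can change $H_1$ by an arbitrarily large finite cyclic group while the volume stays bounded — for instance the $p/1$-fillings of the figure-eight knot complement have $|H_1|=p$ but volume bounded by that of the cusped manifold — so the inequality genuinely fails in dimension $3$. Hence the delicate point is not the algebra but organising the geometry of the thin part so that, for $n\ne 3$, the tubes and cusps can be glued back without destroying the linear-in-volume bound on the number of cells.
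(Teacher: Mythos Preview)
This theorem is not proved in the paper at all: it is a background result quoted from \cite{BGS20}, and the paper's own contribution is the \emph{complementary} direction (Theorem~\ref{theorem: main}), constructing families of manifolds showing that the bound is asymptotically sharp. There is therefore no proof in the paper to compare your proposal against.

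That said, your sketch is broadly along the lines of the actual argument in \cite{BGS20}: control $\log|\tors H_i|$ by Hadamard's inequality applied to the boundary matrices of a simplicial model, and bound the number of cells of that model linearly in the volume via the thick--thin decomposition (essentially Gelander's simplicial model theorem, which you cite as an alternative packaging). Your explanation of why $n=3$ must be excluded --- Dehn fillings along torus cross-sections of Margulis tubes producing arbitrarily large $|\tors H_1|$ at bounded volume --- is also correct. The steps you leave as gestures (uniform simplicial models for the infranil cusp cross-sections, and the reassembly of thick and thin pieces so that the total cell count remains linear in the volume) are exactly where the technical work in \cite{BGS20} is concentrated, so as a proof this is an outline rather than a complete argument; but since the paper under review neither proves nor claims to prove this statement, there is no discrepancy to report.
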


We will prove that the bound cannot be improved in the following sense.

\begin{theorem} \label{theorem: main}
	For every \(n \geq 3\), there exists a sequence of compact hyperbolic manifolds \(M^n_p = \HH^n / \Gamma^n_p\) such that \([\Gamma^n_1 : \Gamma^n_p] = p\) and \[\log_2 |\tors H_i(M^n_p; \ZZ)| \geq p, \quad \text{for all } i = 1, \dots, n - 2.\]
\end{theorem}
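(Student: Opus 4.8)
The plan is to take the manifolds $M^n_p$ to be cyclic covers of one well-chosen base and to reduce the whole statement to a property of its Alexander polynomials. I would fix a closed orientable hyperbolic $n$-manifold $M$ with a primitive class $\phi\in H^1(M;\ZZ)\cong\operatorname{Hom}(\pi_1 M,\ZZ)$, set $\Gamma^n_1:=\pi_1 M$ and $\Gamma^n_p:=\phi^{-1}(p\ZZ)$, and let $M^n_p:=\HH^n/\Gamma^n_p$. Then $[\Gamma^n_1:\Gamma^n_p]=[\ZZ:p\ZZ]=p$ automatically, every $M^n_p$ is compact hyperbolic and orientable, and $M^n_p\to M$ is the $p$-fold cyclic cover classified by $\phi\bmod p$. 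By Poincaré duality and universal coefficients, $\tors H_i(M^n_p;\ZZ)\cong\tors H_{n-1-i}(M^n_p;\ZZ)$; since $\{1,\dots,n-2\}$ is stable under $i\mapsto n-1-i$, it suffices to force exponential growth of $\tors H_i$ in the range $1\le i\le\lfloor(n-1)/2\rfloor$, i.e.\ strictly below the middle dimension.

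The torsion of the covers is controlled by the infinite cyclic cover $M_\infty\to M$, a $\ZZ=\langle t\rangle$-manifold whose homology is finitely generated over $\Lambda:=\ZZ[t^{\pm1}]$. Write $\Delta_i(t)$ for the $i$-th Alexander polynomial of $(M,\phi)$, i.e.\ the order of the $\Lambda$-torsion submodule of $H_i(M_\infty;\ZZ)$ — equivalently, when $H_i(M_\infty;\QQ)$ is finite dimensional, the characteristic polynomial of $t$ acting on it. The Milnor--Wang sequence $0\to H_i(M_\infty;\ZZ)/(t^p-1)\to H_i(M^n_p;\ZZ)\to\ker\bigl(t^p-1\mid H_{i-1}(M_\infty;\ZZ)\bigr)\to 0$ realises $H_i(M_\infty;\ZZ)/(t^p-1)$ as a subgroup of $H_i(M^n_p;\ZZ)$, so its torsion injects into $\tors H_i(M^n_p;\ZZ)$; evaluating the order of this torsion by the regularised value of $\prod_{\zeta^p=1}|\Delta_i(\zeta)|$ and using the estimate $\sum_{\zeta^p=1}\log|\zeta-\alpha|=p\log^+|\alpha|+O(\log p)$ (equidistribution of roots of unity, together with Baker's lower bound when $|\alpha|=1$) gives $\log_2|\tors H_i(M^n_p;\ZZ)|\ge p\log_2\mathbb M(\Delta_i)-C_i\log_2 p$, where $\mathbb M$ is the Mahler measure. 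Hence it is enough to exhibit a single pair $(M,\phi)$ with $\mathbb M(\Delta_i)>1$ — by Kronecker's theorem, with $\Delta_i$ not a product of a monomial and cyclotomic polynomials — for every $i$ with $1\le i\le\lfloor(n-1)/2\rfloor$: replacing such an $M$ by its $k$-fold cyclic cover for a large enough fixed $k$ and relabelling (the Alexander polynomials do not change) converts the displayed inequality into $\log_2|\tors H_i(M^n_p;\ZZ)|\ge p$ for all $p\ge1$, because the linear term then dominates the logarithmic error uniformly in $p$.

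Constructing $(M,\phi)$ is the heart of the matter. I would begin with a closed orientable arithmetic hyperbolic $n$-manifold $N$ of simplest type. By Bergeron--Haglund--Wise its fundamental group is cubulable, hence virtually special by Agol and Wise, hence virtually RFRS; and since $N$ is hyperbolic, $b^{(2)}_i(N)=0$ for every $i\ne n/2$ (Dodziuk), in particular for $0\le i\le\lfloor(n-1)/2\rfloor$. Kielak's virtual fibring theorem, in the refined form that controls the finiteness type of the kernel, then provides a finite cover $M$ and a class $\phi$ for which $H_i(M_\infty;\QQ)$ is finite dimensional for all $i\le\lfloor(n-1)/2\rfloor$; passing to a deep congruence cover I may further arrange that $H_i(M_\infty;\QQ)\ne0$ for each such $i$, using that the corresponding Betti numbers become positive in congruence towers (totally geodesic geometric cycles). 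For these degrees $\Delta_i(t)$ is the characteristic polynomial of the integral monodromy $t$ on $H_i(M_\infty;\QQ)$, so $\mathbb M(\Delta_i)=1$ precisely when this monodromy is quasi-unipotent.

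The main obstacle — which I expect to be the technical core of the argument — is to exclude quasi-unipotence of all these monodromies simultaneously, i.e.\ to make every $\Delta_i$ have a root off the unit circle. One line of attack is to enlarge $b_1(M)$ and let the fibring class $\psi$ vary inside a fibring cone: the polynomials $\Delta_i^\psi$ are then specialisations of a multivariable Alexander polynomial, so unless that polynomial is itself a product of generalised cyclotomic factors — a degeneracy one would have to rule out for these lattices — Lawton's theorem forces $\mathbb M(\Delta_i^\psi)>1$ for a generic integral class $\psi$. A more hands-on alternative is to build $N$ so that it contains a small-dimensional totally geodesic closed hyperbolic submanifold which genuinely fibres over $S^1$ with a pseudo-Anosov-type monodromy acting non-quasi-unipotently on the homology of its fibre, chosen compatibly with $\phi$ so that this fibre homology injects $t$-equivariantly into $H_i(M_\infty;\QQ)$ and contributes a non-cyclotomic factor to $\Delta_i$. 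Once $\mathbb M(\Delta_i)>1$ has been secured for $1\le i\le\lfloor(n-1)/2\rfloor$, the reductions above complete the proof.
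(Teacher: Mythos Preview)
Your reduction to Mahler measures of Alexander polynomials is a coherent strategy, but the proposal leaves its own declared ``main obstacle'' open: you need $\mathbb{M}(\Delta_i)>1$ for every $1\le i\le\lfloor(n-1)/2\rfloor$, and neither of your two suggested attacks is carried through. The Lawton route requires ruling out that the multivariable Alexander polynomial is a product of generalised cyclotomic factors, and you give no mechanism for this in any arithmetic lattice. The totally-geodesic-submanifold route asks, in each degree $i$, for a submanifold whose fibre homology injects $t$-equivariantly into $H_i(M_\infty;\QQ)$, compatibly with the single fibring class handed to you by Kielak--Fisher --- over which you have essentially no control; arranging this simultaneously across all degrees is a genuine problem, not a detail. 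There are softer spots as well: ``passing to a deep congruence cover'' to force $H_i(M_\infty;\QQ)\ne0$ conflates Betti numbers of $M$ with those of its infinite cyclic cover, and ``the Alexander polynomials do not change'' under a $k$-fold cyclic cover is literally false (the roots become $k$-th powers), though your intended conclusion survives because $\mathbb{M}$ is raised to the $k$-th power.

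The paper sidesteps all of this. It manufactures the torsion $(\ZZ/2\ZZ)^p$ directly: in dimension~$3$, cyclic $p$-fold covers of an explicit right-angled dodecahedral manifold contain $p$ non-orientable totally geodesic surfaces, each equipped with a retraction of the ambient manifold onto it that kills all the others, so their order-$2$ classes are independent in $H_1$. For $n\ge4$ the example is promoted one dimension at a time by producing, via a refinement of Bergeron--Haglund--Wise, an $n$-manifold that retracts onto the $(n-1)$-dimensional one; this injects $H_i$ for $i\le n-3$, and Poincar\'e duality covers $i=n-2$. No Alexander polynomials, no monodromy, no virtual fibring --- and nothing left unresolved. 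The price is only that the torsion is exactly $2^p$ rather than $\mathbb{M}(\Delta)^p$ for a potentially larger base.
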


More precisely, for each \(n \ge 3\), we construct a sequence of compact orientable arithmetic hyperbolic \(n\)-manifolds of simplest type \(M^n_p\) such that \[H_i(M^n_p; \Z) \supseteq (\Z / 2\Z)^p\] for all \(i = 1, \dots, n - 2\), where each \(M^n_p\) is a \(p\)-fold cover of a fixed manifold \(M^n_1\). 

\subsection*{Structure of the paper}

In Section~\ref{section: preliminaries}, we collect the necessary background on arithmetic hyperbolic manifolds and right-angled Coxeter groups used throughout the paper. The proof of Theorem~\ref{theorem: main} is given in Section~\ref{section: main theorem proof} and consists of two parts: the case of dimension three, and the case of higher dimensions. Each part is based on a key lemma, which are proved in Sections~\ref{section: base case} and~\ref{section: inductive step}, respectively.


\subsection*{Acknowledgements}

The author is grateful to Ursula Hamenst\"adt for her supervision and invaluable support, and to Nikolay Bogachev and Sami Douba for helpful discussions.

\section{Preliminaries}
\label{section: preliminaries}

\subsection{Arithmetic hyperbolic manifolds of simplest type}

The present survey follows \cite[Section~12.8]{Rat19}.

Let \(f\) be a quadratic form in \(n\) variables with real symmetric coefficient matrix \(A = (a_{ij})\). Then we have \(f(x) = x^t A x\). Let \(R\) be a subring of \(\mathbb{R}\). We say that \(f\) is \emph{over} \(R\) if \(a_{ij} \in R\) for all \(i, j\). The \emph{orthogonal group} of \(f\) over \(R\) is defined to be \begin{multline*}\operatorname{O}(f, R) = \{T \in \operatorname{GL}(n, R) \mid f(Tx) = f(x) \text{ for all } x \in \mathbb{R}^n\} \\ = \{T \in \operatorname{GL}(n, \mathbb{R}) \mid T^t A T = A\}.\end{multline*}

Let \(\ell_n\) be the \emph{Lorentzian quadratic form} in \(n + 1\) variables given by \(\ell_n(x) = -x_0^2 + x_1^2 + \dots + x_n^2\) and \(\langle x, y \rangle_n = \frac{1}{4} (\ell_n(x + y) - \ell_n(x - y))\) be the associated bilinear form. Then \(\operatorname{O}(\ell_n, \mathbb{R}) = \operatorname{O}(n, 1)\). The hyperboloid model of \emph{hyperbolic \(n\)-space} is \[\mathbb{H}^n = \{x \in \mathbb{R}^{n + 1} \mid \ell_n(x) = -1 \text{ and } x_0 > 0\}.\] The restriction of \(\langle \cdot, \cdot \rangle_n\) to the tangent space of \(\mathbb{H}^n\) at any point is positive define and, therefore, \(\mathbb{H}^n\) is a Riemannian manifold. Let \(\operatorname{O}^+(n, 1)\) be the subgroup of \(\operatorname{O}(n, 1)\) consisting of all \(T \in \operatorname{O}(n, 1)\) that leave \(\mathbb{H}^n\) invariant. Then \(\operatorname{O}^+(n, 1)\) has index \(2\) in \(\operatorname{O}(n, 1)\). Restriction induces an isomorphism from \(\operatorname{O}^+(n, 1)\) to \(\operatorname{Isom}(\mathbb{H}^n)\). We shall identify \(\operatorname{O}^+(n, 1)\) with the group of isometries of \(\mathbb{H}^n\).

Let \(\Gamma < \operatorname{O}^+(n, 1)\) be a discrete subgroup; equivalently, the orbit of each point \(x \in \mathbb{H}^n\) is discrete. A discrete subgroup \(\Gamma\) is called a \emph{(hyperbolic) lattice} if there exists a Borel subset \(D \subset \mathbb{H}^n\) such that \(\operatorname{vol}(B) < +\infty\) and \(\bigcup_{\gamma \in \Gamma} \gamma D = \mathbb{H}^n\). If \(D\) is compact, then \(\Gamma\) is said to be \emph{cocompact}.

A \emph{hyperbolic \(n\)-manifold} is a complete, connected Riemannian \(n\) manifold of constant sectional curvature \(-1\). Every complete, connected, simply-connected manifold of constant negative curvature \(-1\) is isometric to \(\mathbb{H}^n\). Thus, every hyperbolic manifold \(M\) is isometric to \(\mathbb{H}^n / \Gamma\), where \(\Gamma < \operatorname{O}^+(n, 1)\) is a torsion-free discrete subgroup, which is isomorphic to \(\pi_1(M)\). The manifold has finite volume if and only if \(\Gamma\) is a lattice. Moreover, \(M\) is compact if and only if \(\Gamma\) is cocompact. Selberg’s lemma, which asserts that every finitely generated matrix group contains a torsion-free subgroup of finite index, can be used to construct hyperbolic manifolds from lattices.

A \emph{number field} \(k\) is a subfield of \(\mathbb{C}\) that is an extension of \(\mathbb{Q}\) of finite degree. A number field \(k\) is said to be \emph{totally real} if all the field embeddings of \(k\) into \(\mathbb{C}\) take values in \(\mathbb{R}\).

Suppose that the quadratic form \(f\) has signature \((n, 1)\). This means that there exists \(M \in \operatorname{GL}(n, \mathbb{R})\) such that \(f(Mx) = \ell_n(x)\) for all \(x \in \mathbb{R}^{n + 1}\). Let \(\operatorname{O}^+(f, R)\) be the subgroup of \(\operatorname{O}(f, R)\) consisting of all \(T \in \operatorname{O}(f, R)\) that leave both components of \(\{x \in \mathbb{R}^{n + 1} \mid f(x) < 0\}\) invariant. Then \(\operatorname{O}^+(f, R)\) has index \(2\) in \(\operatorname{O}(f, R)\).

The group \(\operatorname{O}^+(f, \mathbb{R})\) is a topological group with respect to the Euclidean metric topology on \(\operatorname{GL}(n + 1, \mathbb{R})\). We have that \[M_* \colon \operatorname{O}^+(n, 1) \to \operatorname{O}^+(f, \mathbb{R}), \quad T \mapsto M T M^{-1},\] is an isomorphism of topological groups.

Let \(k\) be a totally real number field, and let \(f\) be a quadratic form over \(k\) in \(n + 1\) variables, with \(n > 0\) and symmetric coefficient matrix \(A = (a_{ij})\). The quadratic form \(f\) is said to be \emph{admissible} if \(f\) has signature \((n, 1),\) and for each nonidentity field embedding, \(\sigma \colon k \to \mathbb{R},\) the quadratic form \(f^\sigma\) over \(\sigma(k)\), with coefficient matrix \(A^\sigma = (\sigma(a_{ij}))\), is positive definite.

Subgroups \(H_1\) and \(H_2\) of a group \(G\) are said to be \emph{commensurable} if \(H_1 \cap H_2\) has finite index in both \(H_1\) and \(H_2\). Let \(\mathcal{O}_k = \mathbb{A} \cap k,\) where \(\mathbb{A}\) denotes the ring of all algebraic integers, be the \emph{ring of integers} of \(k\). A subgroup \(\Gamma\) of \(\operatorname{O}^+(n, 1)\) is called an \emph{arithmetic group of isometries of \(\mathbb{H}^n\) of simplest type defined over} a totally real number field \(K\) if there exists an admissible quadratic form \(f\) over \(k\) in \(n + 1\) variables and a matrix \(M \in \operatorname{GL}(n + 1, \mathbb{R})\) such that \[f(Mx) = \ell_n(x) \quad \text{for all } x \in \mathbb{R}^{n + 1}\] and the subgroups \(M \Gamma M^{-1}\) and \(\operatorname{O}^+(f, \mathcal{O}_k)\) of \(\operatorname{O}^+(f, \mathbb{R})\) are commensurable.


According to the celebrated result by Borel and Harish-Chandra, every arithmetic group is a lattice. Moreover, if \(k \ne \mathbb{Q},\) then \(\Gamma\) is cocompact.

A \emph{\(\Gamma\)-hyperplane} is a hyperplane \(H \subset \HH^n\) such that the quotient \(H / \Stab_\Gamma(H)\) is compact; equivalently, \(\Stab_\Gamma(H)\) is a lattice in \(\Isom(H)\). If \(\Gamma\) is torsion-free, then \(S = H / \Stab_\Gamma(H)\) is an immersed totally geodesic hypersurface in \(M = \HH^n / \Gamma\). Since the universal cover of any hyperbolic manifold is contractible, continuous retractions \(M \to S\) (up to homotopy) are in one-to-one correspondence with homomorphisms \(\Gamma \to \Stab_\Gamma(H)\) that restrict to the identity on \(\Stab_\Gamma(H)\). This motivates the following definition.

Let \(\Gamma\) be a lattice and \(H\) a \(\Gamma\)-hyperplane. Then a \emph{retraction} is a homomorphism \(r \colon \Gamma \to \Stab_\Gamma(H)\) that restricts to the identity on \(\Stab_\Gamma(H)\). Note that if such a homomorphism exists, then it induces an embedding \(H_k(\Stab_\Gamma(H)) \hookrightarrow H_k(\Gamma)\).

\subsection{Hyperbolic right-angled Coxeter groups} It is well known that every totally geodesic subspace of \(\mathbb{H}^n\) can be realised as the non-trivial intersection of \(\mathbb{H}^n\) with a linear subspace \(V \subset\mathbb{R}^{n + 1}\). For a vector \(e \in \mathbb{R}^{n + 1}\) satisfying \(\ell_n(e) = 1\), we define the hyperplane \[H_e^0 = \{x \in \mathbb{H}^n \mid \langle e, x \rangle = 0\}\] and the corresponding closed half-space \[H_e^- = \{x \in \mathbb{H}^n \mid \langle e, x \rangle \le 0\}.\] The dihedral angle \(\phi\) between \(H_{e_1}^0\) and \(H_{e_2}^0\) is determined by \[\langle e_1, e_2 \rangle = -\cos\phi.\]


A \emph{(hyperbolic) polytope} \(P \subseteq \mathbb{H}^n\) is a finite intersection of half-spaces. We further assume that \(P\) is compact (and hence \(\operatorname{vol}(P) < +\infty\)) and has non-empty interior (\(\dim P = n\)). A polytope is said to be \emph{right-angled} if all of its dihedral angles are equal to \(\frac{\pi}{2}\).

Every right-angled polytope \(P\) determines a \emph{Coxeter group} \(\Gamma(P)\) generated by reflections in the facets of \(P\). This group is discrete, and \(P\) serves as a fundamental domain for \(\Gamma(P)\); that is, \(\bigcup_{\gamma \in \Gamma(P)} \gamma P = \mathbb{H}^n\) and \[\forall\gamma \in \Gamma(P) \setminus \{1\} \quad \operatorname{int}P \cap \operatorname{int} \gamma P = \varnothing.\] Since \(P\) is compact, \(\Gamma(P)\) is a cocompact lattice.

Although right-angled polytopes possess many desirable properties (to be discussed below), the difficulty is that no compact right-angled polytopes exist in \(\HH^n\) for \(n > 4\) (\cite{PV05}).

Let \(P\) be a compact right-angled hyperbolic polytope. 
The group \(\Gamma(P)\) has the following presentation: 
\[
	\Gamma(P) = \left\langle \gamma_f \text{ for every facet } f \;\middle|\;
	\begin{aligned}
		& \gamma_f^2 = 1 && \text{for every facet } f, \\
		& \gamma_{f_1}\gamma_{f_2} = \gamma_{f_2}\gamma_{f_1} && \text{if \(f_1\) and \(f_2\) are adjacent}
	\end{aligned}
	\right\rangle.
\]
Let \(H\) be a hyperplane containing a facet \(f\). The subgroup \(\Stab_{\Gamma(P)}(H)\) is generated by reflection in \(H\) and in all hyperplanes containing facets adjacent to \(f\). A retraction \(\Gamma \to \Stab_{\Gamma(P)}(H)\) can be constracted by sending to the identity all generators of \(\Gamma(P)\) that do not belong to \(\Stab_{\Gamma(P)}(H)\).

Finally, there is a simple construction of a finite-index torsion-free subgroup of \(\Gamma(P)\) in dimension \(3\) (see, for example, \cite[Section~3]{Ves17}). Let \(S\) be a finite subgroup of \(\Gamma(P)\), where \(P \subset \HH^3\). Then there exists a vertex \(v\) of the polytope \(P\) and an element \(\gamma \in \Gamma(P)\) such that \(\gamma S \gamma^{-1} \subseteq \Stab_{\Gamma(P)}(v)\). Thus, if \(Q\) is a finite group, \(\phi \colon \Gamma(P) \to Q\) is a homomorphism, and \(\phi|_{\Stab_{\Gamma(P)}(v)} = \operatorname{id}\) for every vertex \(v\), then \(\Ker \phi\) is a finite-index torsion-free subgroup.

The four colour theorem implies that the faces of \(P \subset \HH^3\) can be coloured with four colours so that no two adjacent faces share the same colour. Let \(\alpha\), \(\beta\), and \(\gamma\) denote a basis of the vector space \((\ZZ / 2\ZZ)^3\), and set \(\delta = \alpha + \beta + \gamma\). Note that any three among \(\alpha\), \(\beta\), \(\gamma\), and \(\delta\) are linearly independent. Now let us consider a homomorphism \(\phi \colon \Gamma(P) \to (\ZZ / 2\ZZ)^3\) that sends each generator \(\gamma_f\) to one of the vectors \(\alpha\), \(\beta\), \(\gamma\), or \(\delta\) according to the colour of the facet \(f\). For this map, we have \(\phi|_{\Stab_{\Gamma(P)}(v)} = \operatorname{id}\) for every vertex \(v\), and therefore \(\Ker \phi\) is torsion-free. Moreover, \(\Ker \phi\) contains no orientation-reversing elements, and hence \(\HH^3 / \Ker\phi\) is orientable.

\section{Proof of Theorem~\ref{theorem: main}}
\label{section: main theorem proof}

Let us recall that our goal is to construct, for each \(n \ge 3\), a sequence of compact orientable arithmetic hyperbolic \(n\)-manifolds of simplest type \(M^n_p\) such that \[H_i(M^n_p; \Z) \supseteq (\Z / 2\Z)^p\] for all \(i = 1, \dots, n - 2\), where each \(M^n_p\) is a \(k\)-fold cover of a fixed \(M^n_1\). 

The construction proceeds by induction on the dimension \(n\). The base case is \(n = 3\). The inductive step consists of constructing examples of dimension \(n\) from examples of dimension \(n - 1\).

\subsection{The base case}


The following lemma provides the base case. Since the proof is not short, it will be proved in Section~\ref{section: base case}.


\begin{lemma} \label{lemma: base case}
	There exists a family of compact orientable arithmetic hyperbolic \(3\)-manifolds of simplest type \(M^3_p = \HH^3 / \Gamma^3_p\) such that \([\Gamma^3_1 : \Gamma^3_p] = p\) and \[H_1(M^3_p; \Z) \supseteq (\ZZ / 2\ZZ)^p.\]
\end{lemma}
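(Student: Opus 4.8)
The plan is to exhibit a single compact right-angled hyperbolic polytope $P \subset \HH^3$ together with a distinguished facet $f$ (equivalently a $\Gamma(P)$-hyperplane $H$), and then build the tower $M^3_p$ as a sequence of cyclic covers that ``unwraps'' a circle in $H_1$ coming from the retraction onto the totally geodesic surface $S = H/\Stab_{\Gamma(P)}(H)$. The key observation from the Preliminaries is that $P$ admits a retraction $r \colon \Gamma(P) \to \Stab_{\Gamma(P)}(H)$ (kill every generator not in $\Stab_{\Gamma(P)}(H)$), and that passing to the torsion-free orientable finite-index subgroup $\Gamma^3_1 = \Ker\phi$ obtained from the four-colouring does not destroy this: $\Stab_{\Gamma^3_1}(H) = \Gamma^3_1 \cap \Stab_{\Gamma(P)}(H)$ is the surface group $\pi_1(S')$ of a finite cover $S'$ of $S$, the restriction of $r$ gives a retraction $\Gamma^3_1 \to \pi_1(S')$, and hence an injection $H_1(S';\Z) \hookrightarrow H_1(M^3_1;\Z)$. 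One must choose $P$ so that $S'$ has positive genus, so that $H_1(S';\Z)$ — and therefore $H_1(M^3_1;\Z)$ — surjects onto $\Z$.

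Granting such a $P$, here are the steps. First, fix the surjection $\psi \colon \Gamma^3_1 \to H_1(S';\Z) \to \Z$ obtained by composing $r$, abelianisation of $\pi_1(S')$, and a suitable coordinate projection; let $\Gamma^3_p = \psi^{-1}(p\Z)$, so $[\Gamma^3_1 : \Gamma^3_p] = p$ and $M^3_p = \HH^3/\Gamma^3_p$ is the corresponding cyclic cover (still compact, orientable, arithmetic of simplest type, torsion-free). Second, analyse the preimage of $H$ in $M^3_p$: because $\psi$ factors through the retraction onto $S'$, the subgroup $\Stab_{\Gamma^3_p}(H) = \Gamma^3_p \cap \pi_1(S')$ equals the kernel of $\psi|_{\pi_1(S')} \colon \pi_1(S') \to \Z$, i.e. $S'' := H/\Stab_{\Gamma^3_p}(H)$ is the infinite-cyclic... no — it is the $p$-fold cyclic cover of $S'$ dual to the chosen class. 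Crucially $r$ still restricts to a retraction $\Gamma^3_p \to \Stab_{\Gamma^3_p}(H) = \pi_1(S'')$ — the point is that $r(\Gamma^3_p) \subseteq \pi_1(S')$ and in fact lands in $\pi_1(S'')$ because $\psi \circ r = \psi$ on $\Gamma^3_p$ — so we get an injection $H_1(S'';\Z) \hookrightarrow H_1(M^3_p;\Z)$. Third, compute $H_1(S'';\Z)$: a $p$-fold cyclic cover of a closed orientable surface $S'$ of genus $g \ge 1$ has genus $1 + p(g-1) + (p-1)\cdot(\text{something})$; more to the point, by the transfer / Shapiro argument or by direct handle counting one sees $H_1(S'';\Z)$ contains a free abelian group of rank growing linearly in $p$, hence contains $\Z^p$ and \emph{a fortiori} $(\Z/2\Z)^p$. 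Composing the inclusions yields $(\Z/2\Z)^p \hookrightarrow H_1(M^3_p;\Z)$, as required. (One should double-check that $H_1(S'';\Z)$ is torsion-free — it is, being the $H_1$ of a closed surface — so one genuinely gets $\Z^p$ inside, which certainly contains $(\Z/2\Z)^p$.)

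The main obstacle, and the reason the lemma ``is not short,'' is the very first step: producing an explicit compact right-angled polytope $P \subset \HH^3$ whose associated torsion-free orientable manifold $M^3_1$ retracts onto a totally geodesic \emph{positive-genus} surface. Existence of compact right-angled $3$-polytopes is classical (e.g. the right-angled dodecahedron, or Löbell-type polytopes), and any facet $f$ with enough adjacent facets gives a $\Stab(H)$ that is a cocompact surface group; but one must verify that the totally geodesic surface $S'$ in the four-coloured cover $M^3_1$ actually has $b_1(S') > 0$ — this needs a concrete computation of the genus of $S'$ in terms of the combinatorics of $P$ and of the colouring $\phi$ (via Euler characteristic: $S'$ is tiled by copies of $f$ and its neighbours, and one counts vertices, edges, faces of the induced tiling). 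I expect one can take $P$ to be, say, the right-angled dodecahedron with $f$ a pentagonal face: then $\Stab_{\Gamma(P)}(H)$ is the right-angled Coxeter group of a right-angled pentagon in $\HH^2$, its intersection with $\Ker\phi$ is a closed hyperbolic surface group, and a short Euler-characteristic count shows the genus is large — certainly at least $2$. The remaining steps (cyclic covers, behaviour of the retraction under passage to finite-index subgroups, homology of cyclic covers of surfaces) are standard once the geometric setup is pinned down.
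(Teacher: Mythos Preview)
There is a fatal error in the final step: \(\Z^p\) does \emph{not} contain \((\Z/2\Z)^p\) as a subgroup --- a free abelian group is torsion-free. Your retraction onto the orientable surface \(S''\) injects \(H_1(S'';\Z) \cong \Z^{2g''}\) into \(H_1(M^3_p;\Z)\), so you produce plenty of free rank but zero torsion, and the lemma (and the whole main theorem, which is about \(\tors H_i\)) needs genuine \(2\)-torsion. No amount of covering of a single orientable totally geodesic surface will fix this: cyclic covers of closed orientable surfaces are again closed orientable surfaces, with torsion-free \(H_1\).

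The paper's construction differs from yours precisely at this point. It uses \emph{two} opposite faces \(f, f'\) of the right-angled dodecahedron, with hyperplanes \(H, H'\) and retractions \(r, r'\); because \(f\) and \(f'\) share no adjacent face, \(r\) kills \(\Stab(H')\) and vice versa. A carefully designed colouring \(\phi \colon \Gamma(P) \to (\Z/2\Z)^4 \oplus (\Z/2\Z)^3\) (not the standard four-colouring) is chosen so that in \(M_1 = \HH^3/\Ker\phi\) the surface \(S = H/\Stab(H)\) is orientable while \(S' = H'/\Stab(H')\) is \emph{non-orientable}, hence \(H_1(S';\Z)\) already contains a \(\Z/2\Z\). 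The orientable \(S\) is used, as in your argument, to build the \(p\)-fold cyclic cover \(M_p\); but the torsion comes from the \(p\) disjoint lifts of \(S'\) sitting inside \(M_p\). Using \(r\) and \(r'\) one manufactures, for each lift of \(S'\), a retraction \(M_p \to S'\) that is the identity on that lift and annihilates all the others; these independent retractions split off \(\bigoplus_{i=1}^p H_1(S';\Z)\) inside \(H_1(M_p;\Z)\), yielding the desired \((\Z/2\Z)^p\). The missing idea in your proposal is this second, non-orientable surface and the ``independence'' of the \(p\) retractions onto its lifts.
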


The idea of the proof is to construct manifolds \(M^3_k\) with the following properties:
\begin{itemize}
	\item the manifold \(M^3_p\) contains \(p\) non-orientable subsurfaces;
	\item there exist retractions of \(M^3_p\) onto each of these subsurfaces;
	\item these retractions are ``independent'' in the sense that the fundamental group of each subsurface maps trivially under the retraction onto any other subsurface.
\end{itemize}
The first property implies that the first homology group of each subsurface contains a copy of \((\Z/2\Z)\). The second ensures that the first homology group of each subsurface injects into \(H_1(M^3_p; \Z)\). Finally, the third property implies that the images of these subgroups intersect trivially. Thus, the torsion parts of the first homology groups of subsurfaces form a subgroup isomorphic to \((\Z / 2\Z)^p\) inside \(H_1(M^3_p; \Z)\).

\subsection{The inductive step}

For \(n \ge 4\) there exists a family of compact orientable arithmetic hyperbolic \((n - 1)\)-manifolds of simplest type \(M^{n - 1}_p = \HH^{n - 1} / \Gamma^{n - 1}_p\) such that \([\Gamma^{n - 1}_1 : \Gamma^{n - 1}_p] = p\) and \[H_i(M^{n - 1}_p; \Z) \supseteq (\ZZ / 2\ZZ)^p \quad \text{for all } i = 1, \dots, n - 3.\] As \(M^{n - 1}_1\) is an arithmetic manifold of simplest type, there exist an admissible quadratic form \(q_{n - 1}\) over a totally real number field \(k\) such that \(\Gamma^{n - 1}_1\) is a finite-index subgroup of \(\operatorname{O}^+(q_{n - 1}, \mathcal{O}_k)\).

\begin{remark}
	We do not use it explicitly, but our construction uses the quadratic form \(q_{n - 1} = -\Ofive x_0^2 + x_1^2 + \dots + x_{n - 1}^2\), defined over \(k = \mathbb{Q}[\sqrt{5}]\), with \(\mathcal{O}_k = \mathbb{Z}[\Ofive]\).
\end{remark}


The following lemma is essential for our proof and will be proved in Section~\ref{section: inductive step}.

\begin{lemma} \label{lemma: inductive step}
	There exists a compact arithmetic manifold \(M^n_1 = \HH^n / \Gamma^n_1\) with the following properties:
	\begin{itemize}
		\item \(M^{n - 1}_1\) is embedded as a totally geodesic submanifold in \(M^n_1\);
		\item there exists a retraction \(r_n \colon \Gamma^n_1 \to \Gamma^{n - 1}_1\).
	\end{itemize}
\end{lemma}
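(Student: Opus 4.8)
The plan is to construct the manifold $M^n_1$ by a standard "one dimension up" trick for forms of simplest type. Start from the admissible quadratic form $q_{n-1}$ over the totally real field $k$ for which $\Gamma^{n-1}_1 \le \O^+(q_{n-1}, \mathcal{O}_k)$, and set $q_n = q_{n-1} \perp \langle a \rangle$ for a suitable totally positive $a \in \mathcal{O}_k$ (e.g.\ $a = 1$, or $a = \Ofive$ to stay inside the explicit form in the Remark); then $q_n = q_{n-1}(x_0, \dots, x_{n-1}) + a x_n^2$ is again admissible, of signature $(n,1)$. The last coordinate hyperplane $\{x_n = 0\}$ is a $q_n$-rational hyperplane whose stabiliser in $\O^+(q_n, \mathcal{O}_k)$ is exactly $\O^+(q_{n-1}, \mathcal{O}_k) \times \{\pm 1\}$, acting on the totally geodesic copy of $\HH^{n-1}$ cut out by this hyperplane. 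So at the level of the full integral orthogonal groups there is an obvious embedding of the $(n-1)$-dimensional lattice into the $n$-dimensional one, and an obvious retraction $\O^+(q_n, \mathcal{O}_k) \to \O^+(q_{n-1}, \mathcal{O}_k)$ given by killing the last coordinate --- but this is only a retraction onto the stabiliser subgroup, which is what is wanted.

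The first real step is to arrange that the $(n-1)$-dimensional piece is exactly our prescribed $\Gamma^{n-1}_1$ and not merely commensurable with it. For this I would let $H < \O^+(q_{n-1}, \mathcal{O}_k)$ be the finite-index subgroup equal to $\Gamma^{n-1}_1$, and look for a finite-index subgroup $\Gamma^n_1 < \O^+(q_n, \mathcal{O}_k)$ whose intersection with the hyperplane stabiliser is precisely $\Gamma^{n-1}_1$ and such that the restriction of the coordinate-killing retraction $\O^+(q_n,\mathcal{O}_k)\to\O^+(q_{n-1},\mathcal{O}_k)$ still lands in $\Gamma^{n-1}_1$ on all of $\Gamma^n_1$. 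The natural way to do this is via a congruence condition: choose an ideal $\mathfrak{I} \subset \mathcal{O}_k$ such that $\Gamma^{n-1}_1 \supseteq \O^+(q_{n-1}, \mathcal{O}_k)$-principal congruence subgroup of level $\mathfrak{I}$, take $\Gamma^n_1$ to be (a torsion-free finite-index subgroup inside) the principal congruence subgroup of $\O^+(q_n, \mathcal{O}_k)$ of the same level $\mathfrak{I}$, and then compose with the block projection. One checks that the block projection sends the level-$\mathfrak{I}$ congruence subgroup of $\O^+(q_n,\mathcal{O}_k)$ into the level-$\mathfrak{I}$ congruence subgroup of $\O^+(q_{n-1},\mathcal{O}_k) \subseteq \Gamma^{n-1}_1$, so the retraction exists; and its restriction to the hyperplane stabiliser intersected with $\Gamma^n_1$ is the identity, as required. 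Finally I would invoke Selberg's lemma (respecting the congruence structure) to pass to a torsion-free $\Gamma^n_1$ so that $M^n_1 = \HH^n/\Gamma^n_1$ is a genuine compact (since $k \ne \QQ$) manifold and $\HH^{n-1}/\Gamma^{n-1}_1$ embeds as an embedded (not just immersed) totally geodesic submanifold; embeddedness can be forced by further deepening the congruence level if needed.

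The main obstacle I expect is bookkeeping rather than conceptual: making sure that the same $\Gamma^{n-1}_1$ appearing in the inductive hypothesis --- which a priori is just \emph{some} finite-index torsion-free subgroup of $\O^+(q_{n-1},\mathcal{O}_k)$ with the homology property --- can be realised as the exact hyperplane stabiliser inside a torsion-free $\Gamma^n_1$, while simultaneously (i) keeping $\Gamma^n_1$ torsion-free, (ii) keeping the retraction $r_n$ well-defined onto $\Gamma^{n-1}_1$ on the nose, and (iii) not disturbing the covering degree structure from the $(n-1)$-dimensional construction. If the inductive hypothesis is strengthened slightly --- e.g.\ to record that $\Gamma^{n-1}_1$ contains a fixed principal congruence subgroup --- then all three conditions follow from the block-diagonal/congruence argument above; otherwise one replaces $\Gamma^{n-1}_1$ by its intersection with a deep congruence subgroup, which only shrinks it and hence preserves the containment $H_i \supseteq (\ZZ/2\ZZ)^p$ coming from the retractions onto subsurfaces. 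I would therefore carry the construction out with the congruence-subgroup refinement built into the statement of the inductive hypothesis, so that the block embedding $q_{n-1} \perp \langle a\rangle$ does all the work.
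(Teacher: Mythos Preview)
Your proposal has a genuine gap at the very first step: the ``obvious retraction $\O^+(q_n, \mathcal{O}_k) \to \O^+(q_{n-1}, \mathcal{O}_k)$ given by killing the last coordinate'' does not exist. If $T = \begin{pmatrix} B & c \\ d^t & e \end{pmatrix} \in \O^+(q_n, \mathcal{O}_k)$, the orthogonality relation $T^t A_n T = A_n$ yields $B^t A_{n-1} B + a\, d d^t = A_{n-1}$, so the top-left block $B$ preserves $q_{n-1}$ only when $d = 0$, i.e.\ only when $T$ already lies in the hyperplane stabiliser. Passing to a congruence subgroup does not help: a matrix with $T \equiv I \pmod{\mathfrak{I}}$ still has an arbitrary $d$ (merely $\equiv 0$), and its block $B$ is not orthogonal for $q_{n-1}$, nor is $T \mapsto B$ a homomorphism. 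More conceptually, the identity component $\operatorname{SO}^+(n,1)$ is a simple Lie group and admits no nontrivial homomorphism to the lower-dimensional $\operatorname{SO}^+(n-1,1)$; any retraction onto a hyperplane stabiliser is therefore a genuinely non-algebraic object, existing only on carefully chosen finite-index subgroups and invisible to coordinate projections.

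This is precisely the content of the lemma, and the paper obtains the retraction from the theorem of Bergeron--Haglund--Wise \cite{BHW11}: a cocompact arithmetic lattice of simplest type virtually embeds into a right-angled Coxeter group, and some finite-index subgroup $\Gamma''$ then retracts onto $\Stab_{\Gamma''}(H)$. The remaining work in the paper (Lemma~\ref{lemma: symmetry of retraction} and Theorem~\ref{theorem: retraction}) is exactly the bookkeeping you anticipate --- forcing $\Stab_{\Gamma''}(H)$ to equal the prescribed $\Gamma^{n-1}_1$ and keeping everything torsion-free --- but none of it can start without the BHW retraction replacing your nonexistent block projection. A secondary point: your fallback of shrinking $\Gamma^{n-1}_1$ to a deeper congruence subgroup would not automatically preserve the inclusion $(\ZZ/2\ZZ)^p \subseteq H_i$, since passing to a finite-index subgroup alters homology; the paper avoids this by fixing $\Gamma^{n-1}_1$ and building $\Gamma^n_1$ around it.
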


Given this, define \(\Gamma^n_p = r_n^{-1}(\Gamma^{n - 1}_p)\) and \(M^n_p = \HH^n / \Gamma^n_p\). As a retraction induces an embedding of the homology groups, we have \[(\Z / 2\Z)^p \subseteq H_i(M^{n - 1}_p; \Z) = H_i(\Gamma^{n - 1}_p; \Z) \subseteq H_i(\Gamma^n_p; \Z) = H_i(M^n_p; \Z)\] for all \(i = 1, \dots, n - 3\). Moreover, by the Poincar\'e duality, we also have \[(\Z / 2\Z)^p \subseteq \tors H_1(M^n_p; \Z) = \tors H_{n - 2}(M^n_p; \Z),\] which completes the proof of the theorem.

\section{Proof of Lemma~\ref{lemma: base case}}
\label{section: base case}

\begin{figure}
	\includegraphics[scale=1]{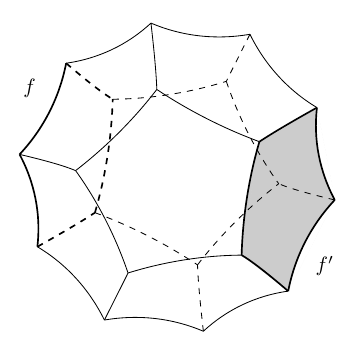}
	\caption{The regular right-angled dodecahedron.}
	\label{figure: dodecahedron}
\end{figure}

Let \(P \subset \HH^3\) be the regular right-angled hyperbolic dodecahedron. The group \(\Gamma(P)\) is a subgroup of an arithmetic Coxeter group \(\Gamma^3\) (see, for example, \cite[Section~2]{Ves17}). Since every arithmetic Coxeter group is an arithmetic group of simplest type, the same holds for \(\Gamma(P)\). In fact, it is shown in \cite{Bug84} that \[\Gamma^3 = \operatorname{O}^+\left(-\frac{1 + \sqrt{5}}{2} x_0^2 + x_1^2 + x_2^2 + x_3^2,\ \Z\left[\frac{1 + \sqrt{5}}{2}\right]\right).\]

Let \(f\) and \(f'\) denote a pair of opposite faces of \(P\) (shown in bold in Figure~\ref{figure: dodecahedron}), and \(H\) and \(H'\) denote the hyperplanes containing \(f\) and \(f'\) respectively. We remind that there are the retractions \[r \colon \Gamma(P) \to \Stab_{\Gamma(P)}(H) \quad \text{and} \quad r' \colon \Gamma(P) \to \Stab_{\Gamma(P)}(H')\] that map to the identity all generators of \(\Gamma(P)\) that do not belong to \(\Stab_{\Gamma(P)}(H)\) and \(\Stab_{\Gamma(P)}(H')\) respectively. Moreover,  as \(f\) and \(f'\) have no common adjacent faces, \[r\left(\Stab_{\Gamma(P)}(H')\right) = 1 \quad \text{and} \quad r'\left(\Stab_{\Gamma(P)}(H)\right) = 1.\]

%
\begin{figure}[]
	\includegraphics{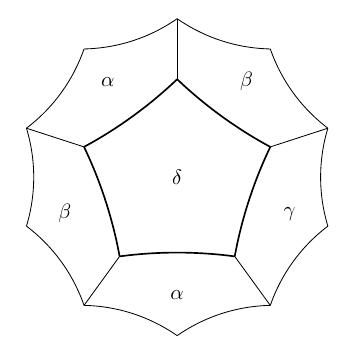}
	\includegraphics{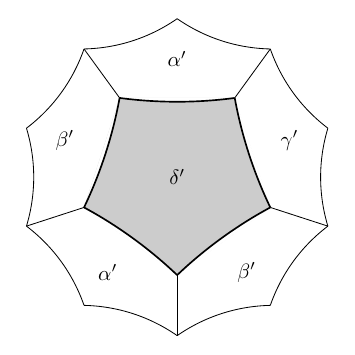}
	\caption{The face colouring of the dodecahedron.}
	\label{figure: face colouring}
\end{figure}

Let \(\alpha\), \(\beta\), \(\gamma\), and \(\delta\) denote the standard basis of \((\Z / 2\Z)^4 \oplus 0\), and let \(\alpha'\), \(\beta'\), and \(\gamma'\) denote the standard basis of \(0 \oplus (\Z / 2\Z)^3\). Define \(\delta' = \alpha' + \beta' + \gamma'\). Let \[\phi \colon \Gamma(P_1) \to (\Z / 2\Z)^4 \oplus (\Z / 2\Z)^3\] that maps a generator of \(\Gamma(P)\) to the colour of the corresponding face in Figure~\ref{figure: face colouring}. Note that any three elements of the set \(\{\alpha, \beta, \gamma, \delta, \alpha', \beta', \gamma', \delta'\}\) are linearly independent, and no two adjacent faces have the same colour. Therefore, the kernel \(\Ker\phi\) is torsion-free and \(\HH^3 / \Ker\phi\) is a compact manifold. Moreover, as \(\sigma \circ \phi\), where \(\sigma\) is the sum of all coordinates, maps all generators of \(\Gamma(P)\), which are reflections, to \(1\), the kernel of \(\phi\) is torsion-free and the manifold \(\HH^3 / \Ker\phi\) is orientable.

We denote \(\Gamma = \Ker \phi\). The restrictions \(r|_\Gamma\) and \(r'|_\Gamma\) define retractions \[\Gamma \to \Stab_\Gamma(H) \quad \text{and} \quad \Gamma \to \Stab_\Gamma(H')\] respectively. Indeed, let \(\rho\) and \(\rho'\) denote the projections from \((\Z / 2\Z)^4 \oplus (\Z / 2\Z)^3\) onto the first and second summands respectively. Then \[\rho \circ \phi = \phi \circ r \quad \text{and} \quad \rho' \circ \phi = \phi \circ r'.\] Thus, \(\phi(\gamma) = 0\) implies \(\phi(r(\gamma)) = 0\) and \(\phi(r'(\gamma)) = 0\). It follows that  \(r(\Gamma) \subset \Gamma\) and \(r'(\Gamma) \subset \Gamma\), and hence \(r|_\Gamma\) and \(r'|_\Gamma\) are retractions. 

The last thing to note is that \(S = H / \Stab_\Gamma(H)\) is an orientable surface and \(S' = H' / \Stab_\Gamma(H')\) is non-orientable.
Indeed, \(\sigma \circ \phi\), where \(\sigma\) is the sum of the first three coordinates, maps the reflection in \(f\) to \(0\) and the other generators of \(\Stab_\Gamma(H)\), which inverse the orientation of \(H\), to \(1\). Thus, all elements of \(\Gamma(P)\) that inverse the orientation of \(H\) have a non-trivial image under \(\phi\), and \(S\) is orientable. The homomorphism \(\phi\) maps the product \(\pi\) of the reflections in \(f'\) and the three top faces in the right part of Figure~\ref{figure: face colouring} to \[\delta' + \beta' + \alpha' + \gamma' = 0.\] However, \(\pi\) does not preserve the orientation of \(H'\): the reflection in \(f'\) preserves the orientation of \(H'\), while the other three do not. Thus, \(S'\) is non-orientable.

\begin{figure*}[ht]
\centering
\begin{tabular}{cc}
    \includegraphics{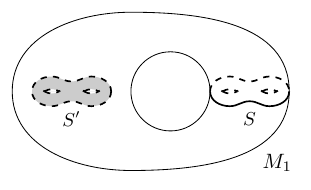} &
    \includegraphics{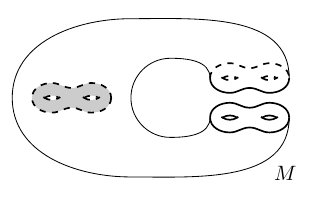} \\
    \includegraphics{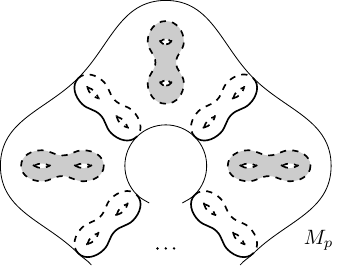} &
    \includegraphics{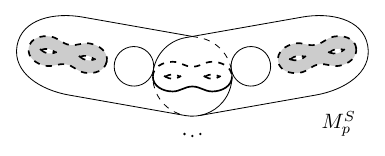}
\end{tabular}
\caption{The manifolds \(M_1\), \(M\), \(M_p\), and \(M^S_p\).}
\label{figure: manifolds}
\end{figure*}

Denote \(M_1 = \HH^3 / \Gamma\). Since both \(M_1\) and \(S\) are orientable, the cohomology class \([S] \in H^1(M; \Z)\) is non-trivial. Let \(M_p\) denote the \(p\)-fold cyclic cover of \(M_1\) associated with the cohomology class \([S]\) (see Figure~\ref{figure: manifolds}). Such covers admit a geometric interpretation: let \(M\) denote the manifold obtained from \(M_1\) by cutting along \(S\). Its boundary \(\partial M\) consists of two copies of \(S\), and \(M_p\) is obtained by gluing \(p\) copies of \(M\) cyclically along their boundary components via the identity map.

Let \(M^{S}_p\) denote the space obtained by gluing \(p\) copies of \(M_1\) along the common embedded surface \(S\) (see Figure~\ref{figure: manifolds}). There is a natural projection \(M_p \twoheadrightarrow M^S_p\) obtained by mapping each copy of \(M \subset M_p\) to the corresponding copy of \(M_1\).

We now show that for every copy of \(S' \subset M_p\) there exists a retraction \(M_p \to S'\) which restricts to the identity on the fundamental group of that copy of \(S'\) and kills the fundamental groups of all other copies. Indeed, fix such a copy of \(S'\). Consider the composition 
\[\begin{tikzcd}
	M_p \arrow[r, two heads] & M^S_p \arrow[r, two heads] & M_1 \arrow[r, two heads, "r'"] & S',
\end{tikzcd}\]
where \begin{itemize}
	\item \(M_p \twoheadrightarrow M_p^{S}\) maps each lifted copy of \(M\) to the corresponding copy of \(M_1\);
	\item \(M_p^{S} \twoheadrightarrow M_1\) collapses all copies of \(M_1\) except the one containing the chosen \(S'\), using the retraction \(r \colon M_1 \to S\) ;
	\item \(r'\colon M_1\twoheadrightarrow S'\) is the retraction constructed earlier.
\end{itemize}
This composite map is a retraction onto the chosen copy of \(S'\) and is trivial on every other copy of \(S'\). Since such a retraction exists for each copy of \(S'\), \[\bigoplus_{i=1}^p H_1(S'; \Z) \subseteq H_1(M_p; \Z).\] Because each \(H_1(S'; \Z)\) contains a non-trivial element of order \(2\), we conclude that \[(\Z/2\Z)^p \subseteq H_1(M_p; \Z).\]

\section{Proof of Lemma~\ref{lemma: inductive step}}
\label{section: inductive step}

Let us recall that \(M^{n - 1}_1 = \HH^{n - 1} / \Gamma^{n - 1}_1\) is a compact arithmetic hyperbolic \((n - 1)\)-manifold of simplest type. Thus, by definition, \(\Gamma^{n - 1}_1\) is a finite-index subgroup of \(\Gamma^{n - 1} = \operatorname{O}^+(q_{n - 1}, \mathcal{O}_k)\) for an admissible quadratic form \(q_{n - 1}\) over a totally real number field \(k\). Let \(q_n = q_{n - 1} + x_n^2\) and \(\Gamma^n = \operatorname{O}^+(q_n, \mathcal{O}_k)\). Note that \(\Gamma^{n - 1} = \Stab_{\Gamma^n}(H)\), where \(H = \{x \in \HH^n \mid x_n = 0\}\). Our goal is to find a finite-index torsion-free subgroup \(\Gamma^n_1\) of \(\Gamma^n\) that contains \(\Gamma^{n - 1}_1\) and admits a retraction \(\Gamma^n_1 \to \Gamma^{n - 1}_1\). To construct the retraction, it is natural to apply the following theorem for \(\Gamma = \Gamma^n\).


\begin{theorem}[\cite{BHW11}] \label{theorem: BHW}
	Let \(\Gamma^n\) be a cocompact arithmetic lattice and \(H \subset \mathbb{H}^n\) be a \(\Gamma^n\)-hyperplane. Then, there exists a finite-index subgroup \(\Gamma'' \subset \Gamma^n\) such that there is a retraction \(r \colon \Gamma'' \to \operatorname{Stab}_{\Gamma''}(H)\).
\end{theorem}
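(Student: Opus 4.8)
The plan is to realise $\Gamma^n$ as a group acting properly and cocompactly on a CAT(0) cube complex in which $\Stab_{\Gamma^n}(H)$ sits as a convex--cocompact subgroup, and then to feed this into the canonical--retraction machinery for convex--cocompact subgroups of virtually special groups. It is enough to treat the case in which $\Gamma^n$ is arithmetic of simplest type, which is the case needed above (in the general arithmetic case one argues similarly, using the abundance of totally geodesic hypersurfaces). So I write $\Gamma^n$, up to commensurability, as $\operatorname{O}^+(q, \mathcal{O}_k)$ for an admissible form $q$ over a totally real field $k$, with $H = e_0^{\perp} \cap \HH^n$ for a primitive $e_0 \in \mathcal{O}_k^{n+1}$ satisfying $q(e_0) > 0$.

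\emph{Step 1 (a wallspace).} First I would observe that every primitive $e \in \mathcal{O}_k^{n+1}$ with $q(e) > 0$ gives a $\Gamma^n$-hyperplane $e^{\perp} \cap \HH^n$: the restriction $q|_{e^{\perp}}$ is again admissible, so $\operatorname{O}^+(q|_{e^{\perp}}, \mathcal{O}_k)$ is a lattice commensurable with $\Stab_{\Gamma^n}(e^{\perp} \cap \HH^n)$. Then I would choose finitely many $\Gamma^n$-hyperplanes $H = H_1, \dots, H_m$ whose $\Gamma^n$-orbits together cut $\HH^n$ into chambers of uniformly bounded diameter --- possible since $\Gamma^n$ is cocompact and such hyperplanes are plentiful near any fundamental domain --- and let $\mathcal{W} = \bigcup_j \Gamma^n H_j$, a $\Gamma^n$-invariant collection of hyperplanes, each of which separates $\HH^n$.

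\emph{Step 2 (cubulation; the main obstacle).} Next I would apply Sageev's dual cube complex construction to the wallspace $(\HH^n, \mathcal{W})$, obtaining a CAT(0) cube complex $C$ with a proper $\Gamma^n$-action (cube stabilisers are finite, being intersections of the ``thin'' hyperplane stabilisers). The hard point --- and the place where arithmeticity is genuinely used --- is that this action is also cocompact and that $C$ is finite-dimensional and quasi-isometric to $\HH^n$. This rests on the separation estimate: inside a single orbit $\Gamma^n H_j$ every hyperplane is $e^{\perp}$ with $q(e)$ equal to a fixed constant $c$, and for two of them $\langle e_1, e_2 \rangle$ lies in a fixed fractional ideal of $k$ while $|\langle e_1, e_2 \rangle^{\sigma}| \le c^{\sigma}$ for every non-identity embedding $\sigma$, because $q^{\sigma}$ is positive definite; since anisotropy of $q$ (equivalent to cocompactness) forbids tangency, $c^2 - \langle e_1, e_2 \rangle^2$ is a nonzero algebraic number all of whose conjugates are bounded in absolute value by a constant depending only on $q$, so by a Northcott-type estimate it is bounded away from $0$. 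Hence any two hyperplanes of $\mathcal{W}$ either cross at an angle bounded below or lie at distance $\ge \varepsilon > 0$; together with cocompactness of $\Gamma^n$ this yields (as in \cite{BHW11}) the finite-dimensionality of $C$ and the cocompactness of the action. I would also record that $\Gamma^n$, being a cocompact lattice in $\HH^n$, is word-hyperbolic.

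\emph{Step 3 (specialness and the retraction).} Finally I would invoke Agol's theorem: a word-hyperbolic group acting properly and cocompactly on a CAT(0) cube complex is virtually special in the sense of Haglund--Wise, so there is a finite-index $\Gamma_0 \le \Gamma^n$ with $C/\Gamma_0$ special. (The paper \cite{BHW11} predates Agol's theorem and instead verifies the Haglund--Wise non-osculation criteria for $C$ directly; either route is fine.) The subgroup $L = \Stab_{\Gamma^n}(H)$ acts cocompactly on the convex set $H \subset \HH^n$, hence is quasiconvex in $\Gamma^n$ and stabilises a convex subcomplex of $C$ on which it acts cocompactly; thus $L_0 = L \cap \Gamma_0$ is a convex--cocompact subgroup of the special group $\Gamma_0$. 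By the Haglund--Wise canonical completion and retraction, $L_0$ is a virtual retract of $\Gamma_0$: there is a finite-index $\Gamma'' \le \Gamma_0 \le \Gamma^n$ containing $L_0$ together with a retraction $r \colon \Gamma'' \to L_0$. Since $L_0 \le \Gamma'' \le \Gamma_0$, one has $\Stab_{\Gamma''}(H) = \Gamma'' \cap L = L_0$, so $r$ is exactly the desired retraction. The genuinely hard ingredient is the cocompact, finite-dimensional cubulation of Step 2; granting that and the (now standard) virtual specialness, the rest is bookkeeping.
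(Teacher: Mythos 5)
Your argument is correct, and Steps~1--2 are essentially the cubulation from \cite{BHW11}, which the paper recaps at the start of the proof of Lemma~\ref{lemma: symmetry of retraction}: a $\Gamma^n$-invariant, locally finite family of $\Gamma^n$-hyperplanes drawn from the arithmetic structure of the form, with the minimal separation between disjoint walls in a fixed orbit furnished by exactly the Mahler--Northcott estimate you sketch, and Sageev's dual $\operatorname{CAT}(0)$ cube complex $\mathcal{C}$ carrying a proper cocompact $\Gamma^n$-action. The divergence is in Step~3. The route in \cite{BHW11} (and the one this paper leans on) keeps the right-angled Coxeter group in the foreground: for a sufficiently deep finite-index subgroup $\Gamma'$ one embeds $\mathcal{C}$ $\Gamma'$-equivariantly into the Davis--Moussong complex of the abstract RACG $C(\Gamma')$ on the $\Gamma'$-classes of hyperplanes, deduces an embedding $\Gamma' \hookrightarrow C(\Gamma')$, retracts $C(\Gamma')$ onto the parabolic subgroup of classes meeting $H$, and uses Scott's method to descend this to a retraction of a finite-index $\Gamma'' \le \Gamma'$ onto $\Stab_{\Gamma''}(H)$. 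You instead invoke Agol's virtual specialness theorem (unavailable when \cite{BHW11} was written, which is why they established specialness by hand through the RACG embedding) and then the Haglund--Wise canonical completion and retraction for the convex-cocompact subgroup $\Stab_{\Gamma^n}(H)\cap\Gamma_0$. Both routes prove the statement, and yours is shorter as a derivation of the bare theorem; but the paper deliberately keeps the RACG embedding explicit because Lemma~\ref{lemma: symmetry of retraction} in the next section needs to track the induced permutation action of $\sigma$ on the generators of $C(\Delta')$, information that is lost if Theorem~\ref{theorem: BHW} is treated purely as a black box. Finally, the hedge that ``in the general arithmetic case one argues similarly'' can safely be dropped: the hypothesis that a $\Gamma^n$-hyperplane exists already places $\Gamma^n$ in the simplest-type setting of \cite{BHW11}, which is the only case this paper uses.
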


There are two main issues that prevent a direct application of the theorem:
\begin{enumerate}
	\item The group \(\Gamma^n\) is not torsion-free. For instance, it contains the reflection in the hyperplane \(H\).
	\item After passing to a finite-index subgroup \(\Gamma''\) of \(\Gamma^n\), the stabiliser \(\Stab_{\Gamma''}(H)\) does not necessarily contain \(\Gamma^{n - 1}\).
\end{enumerate}
The first issue can, in fact, be reduced to the second. By Selberg’s lemma, there exists a torsion-free finite-index subgroup of \(\Gamma^n\). Thus, we may apply the theorem to such a subgroup in place of \(\Gamma^n\). It then remains to ensure that the finite-index subgroup \(\Gamma''\) provided by the theorem can be chosen in such a way that \(\Stab_{\Gamma’'}(H)\) contains \(\Gamma^{n - 1}_1\). 

Assuming this is possible, we define \(\Gamma^n_1 = r^{-1}(\Gamma^{n - 1}_1)\), where \(r \colon \Gamma'' \to \Stab_{\Gamma''}(H)\) is the retraction provided by the theorem. Then \(\Gamma^n_1\) is a finite-index subgroup of \(\Gamma''\), and hence of \(\Gamma^n\), and satisfies the required properties.

In what follows, we modify the proof of the theorem to ensure that the desired properties are satisfied. We begin by proving the following lemma.



\begin{lemma} \label{lemma: symmetry of retraction}
	In the notation of Theorem~5.1, for any \(\sigma \in \Stab_\Gamma(H)\) there exists a finite-index subgroup \(\Delta'' \subset \Gamma''\) such that \(\sigma \Delta'' \sigma^{-1} = \Delta''\) and
	\[
		r(\sigma \delta \sigma^{-1}) = \sigma r(\delta) \sigma^{-1} \quad \text{for all } \delta \in \Delta''.
	\]
\end{lemma}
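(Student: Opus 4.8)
The plan is to exploit the fact that the retraction $r$ produced by the Bergeron--Haglund--Wise machinery is, up to a finite-index restriction, canonical enough to be equivariant under a prescribed symmetry. Concretely, given $\sigma \in \Stab_\Gamma(H)$, I would consider the ``conjugated'' data $(\sigma\Gamma''\sigma^{-1}, \sigma H\sigma^{-1})$ and the conjugated map $r^\sigma(\delta) = \sigma^{-1} r(\sigma\delta\sigma^{-1})\sigma$. Since $\sigma$ stabilises $H$, conjugation by $\sigma$ sends $\Stab_{\Gamma''}(H)$ to $\Stab_{\sigma\Gamma''\sigma^{-1}}(H)$, so $r^\sigma$ is again a retraction of $\sigma^{-1}\Gamma''\sigma \cap \Gamma''$ (or an appropriate common finite-index subgroup) onto the corresponding stabiliser of $H$. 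The point is that both $r$ and $r^\sigma$ are retractions associated with the \emph{same} hyperplane $H$; I want to argue that on a sufficiently small finite-index subgroup $\Delta''$ they must agree, which is exactly the equivariance statement after unwinding the conjugation.

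The key steps, in order, are: (1) replace $\Gamma''$ by $\Gamma'' \cap \sigma^{-1}\Gamma''\sigma$, a finite-index subgroup of $\Gamma$ that is not yet $\sigma$-invariant, and then take the intersection $\bigcap_{k \in \ZZ} \sigma^k \Gamma_0 \sigma^{-k}$ — this is a finite intersection because $\sigma$ has finite order in $\Gamma$ modulo the subgroup generated by it up to commensurability, or more carefully, because there are only finitely many subgroups of $\Gamma$ of a given index, so the orbit of $\Gamma_0$ under conjugation by powers of $\sigma$ is finite; call the resulting $\sigma$-invariant finite-index subgroup $\Delta''_0$. (2) On $\Delta''_0$, both $r|_{\Delta''_0}$ and $\delta \mapsto \sigma^{-1}r(\sigma\delta\sigma^{-1})\sigma$ are defined and are retractions onto $\Stab_{\Delta''_0}(H)$; I need to trace through the BHW construction to confirm that the retraction, restricted to the commutator-type subgroup it is built on, is \emph{uniquely determined} by the pair $(\text{ambient group}, H)$ — this is the heart of the matter and depends on the retraction being defined via a well-defined ``closest-point to $H$'' or ``nearest coset'' rule coming from the CAT(0) cube complex that BHW dualise to the wall associated with $H$. (3) Given uniqueness on a further finite-index (still $\sigma$-invariant, by the same orbit-finiteness trick) subgroup $\Delta''$, conclude $r(\delta) = \sigma^{-1}r(\sigma\delta\sigma^{-1})\sigma$ for all $\delta \in \Delta''$, i.e. $r(\sigma\delta\sigma^{-1}) = \sigma r(\delta)\sigma^{-1}$, which is the claim.

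I expect the main obstacle to be step (2): establishing that the BHW retraction is canonical. The retraction in \cite{BHW11} is constructed from a specific $\Gamma''$-invariant subcomplex of a cube complex (the ``dual'' to the collection of hyperplanes in the $\Gamma$-orbit of $H$), together with a choice of a combinatorial projection; a priori the map depends on auxiliary choices made during the separability argument (which translates of $H$ are included, the specific finite quotient witnessing separability, etc.). So rather than hoping the map is literally canonical, the cleaner route is: the collection of $\Gamma$-translates of $H$ is $\sigma$-invariant (since $\sigma$ normalises nothing in particular but $\sigma H\sigma^{-1}$ is again such a translate, indeed $\sigma H = H$), hence the cube complex $X$ dual to this wall-collection carries a $\sigma$-action extending the $\Gamma''$-action after shrinking; the BHW retraction corresponds to collapsing $X$ onto the hyperplane-wall of $X$ associated to $H$, and this collapse is $\sigma$-equivariant because $\sigma$ preserves that wall. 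Making this last assertion precise — that one can run the BHW argument $\sigma$-equivariantly, or equivalently that the combinatorial projection onto the wall is intrinsic — is where the real work lies, and it is what the rest of Section~\ref{section: inductive step} will presumably carry out in detail. The orbit-finiteness bookkeeping in steps (1) and (3) is routine once one knows $\sigma$ has a well-behaved (e.g.\ finite or virtually cyclic) action, which it does since $\Stab_\Gamma(H)$ is a lattice in $\Isom(H)$ and we only need finitely many conjugates.
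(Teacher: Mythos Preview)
Your proposal is correct and follows essentially the same route as the paper: the paper also passes to a \(\sigma\)-normalised finite-index subgroup \(\Delta'\), observes that \(\sigma\) then acts on the ambient right-angled Coxeter group \(C(\Delta')\) (equivalently, on the dual cube complex) by permuting generators, and uses that the Coxeter-group retraction onto the wall of \(H\) is manifestly \(\sigma\)-equivariant since \(\sigma\) fixes \(H\); a commutative-cube diagram then pushes this equivariance down to a suitable \(\Delta'' \subset \Gamma''\). Your ``cleaner route'' in step~(2) is exactly this argument, and your worry about needing uniqueness of the retraction is sidestepped in the same way the paper does --- not by uniqueness, but by naturality of the wall-collapse under the \(\sigma\)-action.
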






\begin{proof}
	Let us at first recap the proof of Theorem~\ref{theorem: BHW}. At first, the authors construct a locally finite \(\Gamma\)-invariant family of \(\Gamma\)-hyperplanes whose dual graph is quasi-isometric to \(\HH^n\). By ``filling in'' the skeletons of the cubes in the dual graph they obtain a \(\operatorname{CAT}(0)\) cube complex \(\mathcal{C}\). Its hyperplanes are in one-to-one correspondence with the constructed locally finite family of hyperplanes in \(\HH^n\).
	
	Next, they take a finite-index subgroup \(\Gamma' \subseteq \Gamma\) and consider the abstract right-angled Coxeter group \(C(\Gamma')\) generated by the \(\Gamma'\)-equivalence classes of these hyperplanes. Two generators commute if and only if the corresponding classes of hyperplanes intersect. Let \(\operatorname{DM}(\Gamma')\) be the Davis--Moussong realisation of the Coxeter group. If \(\Gamma'\) is sufficiently deep then there exists a \(\Gamma'\)-equivariant isometric embedding \(\mathcal{C} \to \operatorname{DM}(\Gamma)\), which provides an embedding \(\Gamma \to C(\Gamma')\): roughly speaking, \(\gamma\) is mapped to the product of the generators that are correspond to the hyperplanes that intersects a representing loop of \(\gamma\).
	
	At last, any abstract right-angled Coxeter group retracts onto the stabiliser of any of the hyperplanes of its Davis--Moussong complex. Using Scott's method, they show that there is a finite-index subgroup \(\Gamma'' \le \Gamma'\) that admits a retraction \(\Gamma'' \to \Stab_{\Gamma''}(H)\).
	
	Let us start with this group \(\Gamma'\). Let \(\Delta' = (\Gamma' \cap \sigma \Gamma' \sigma^{-1}) \subseteq \Gamma'\) be a finite-index subgroup. There exists an automorphism \[\sigma_* \colon \Delta' \to \Delta', \quad \delta \mapsto \sigma \delta \sigma^{-1}.\] This automorphism is induced by the isometry \[\sigma \colon \HH^n / \Delta' \to \HH^n / \Delta', \quad \Delta' x \mapsto \Delta' \sigma x = \sigma \Delta' x.\] Thus, \(\sigma\) defines an isometry of \(\mathcal{C} / \Delta'\) and permutes the generators of \(C(\Delta')\). Let \(\Gamma'' \subseteq \Delta'\) denote the finite-index subgroup with the retraction \(r \colon \Gamma'' \to \Stab_{\Gamma''}(H)\). Let \(\Gamma''' = r^{-1}(\Gamma'' \cap \sigma \Gamma'' \sigma^{-1})\) and \(\Delta'' = \Gamma''' \cap \sigma \Gamma''' \sigma^{-1}\).
	
	Our aim is to prove that the left square of the following diagram commutes.
	\begin{displaymath}
		\begin{tikzcd}[row sep=scriptsize, column sep=scriptsize]
			& \Delta'' \arrow[rr, hook, "i"] \arrow[dd, "r" {yshift=-10pt, swap}] & & C(\Delta') \arrow[dd, "r" {swap}] \\
			\Delta'' \arrow[ur, "\sigma_*"] \arrow[rr, crossing over, hook, "i" {xshift=10pt}] \arrow[dd, "r" {swap}] & & C(\Delta') \arrow[ur, "\sigma_*"] \\
			& \Stab_{\Delta''}(H) \arrow[rr, hook, "i" {xshift=10pt}] & & \Stab_{C(\Delta')}(H) \\
			\Stab_{\Delta''}(H) \arrow[ur, "\sigma_*"] \arrow[rr, hook, "i"] & & \Stab_{C(\Delta')}(H) \arrow[from=uu, crossing over, "r" {yshift=-10pt, swap}] \arrow[ur, "\sigma_*"] \\
		\end{tikzcd}
	\end{displaymath}
	Note that the front and back squares commute, which follows from the definition of \(r \colon \Delta'' \to \Stab_{\Delta''}(H)\). The top, bottom, and right squares commute as \(\sigma\) permutes the generators of \(C(\Delta')\). Finally, the left square commutes as the map
	\begin{displaymath}\begin{tikzcd}
		\Delta'' \arrow[r, "\sigma"] & \Delta'' \arrow[r, "i"] & \Stab_{\Delta''}(H) \arrow[r, hook, "i"] & \Stab_{C(\Delta')}(H)
	\end{tikzcd}\end{displaymath}
	is equal to	
	\begin{displaymath}\begin{tikzcd}
		\Delta'' \arrow[r, "r"] & \Stab_{\Delta''}(H) \arrow[r, "\sigma"] & \Stab_{\Delta''}(H) \arrow[r, hook, "i"] & \Stab_{C(\Delta')}(H).
	\end{tikzcd}\end{displaymath}
\end{proof}

\begin{theorem} \label{theorem: retraction}
	Let \(\Gamma\) be a cocompact arithmetic hyperbolic lattice of simplest type, let \(\Gamma' \subseteq \Gamma\) be a finite-index subgroup, let \(H\) be a \(\Gamma\)-hyperplane, and let \(\Sigma \subseteq \Stab_\Gamma(H)\) be a finite-index subgroup. Then there exists a finite-index subgroup \(\Delta' \subseteq \Gamma\) such that \(\Sigma \subseteq \Delta'\), \(\Delta' \subseteq \langle \Gamma', \Sigma \rangle\), and \(\Stab_{\Delta'}(H) = \Sigma\), and which admits a retraction \(r' \colon \Delta' \to \Stab_{\Delta'}(H)\). Moreover, if both \(\Gamma'\) and \(\Sigma\) are torsion-free, then so is \(\Delta'\).
\end{theorem}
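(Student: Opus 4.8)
The plan is to run the argument of Lemma~\ref{lemma: symmetry of retraction} for every element of a generating set of $\Sigma$ simultaneously, and then to pass to the fixed subgroup under the resulting finite collection of compatible automorphisms. Concretely, fix a finite generating set $\sigma_1, \dots, \sigma_m$ of $\Sigma$. Starting from the finite-index subgroup $\Gamma'' \subseteq \Gamma'$ provided by Theorem~\ref{theorem: BHW} (applied with the lattice $\Gamma$ and hyperplane $H$), I would repeatedly apply the construction in the proof of Lemma~\ref{lemma: symmetry of retraction}: at each stage, replace the current group by the finite-index subgroup $\Delta''$ associated to the next generator $\sigma_j$, so that after $m$ steps we have a finite-index subgroup $\Lambda \subseteq \Gamma''$ which is normalised by every $\sigma_j$ (hence by all of $\Sigma$), carries a retraction $r \colon \Lambda \to \Stab_\Lambda(H)$, and satisfies $r(\sigma_j \delta \sigma_j^{-1}) = \sigma_j r(\delta) \sigma_j^{-1}$ for all $\delta \in \Lambda$ and all $j$. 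Since conjugation by $\sigma_j$ preserves $H$, the induced automorphisms of $\Lambda$ respect the splitting into $\Stab_\Lambda(H)$ and the rest, and by iterating we obtain that $r$ is $\Sigma$-equivariant: $r(s \delta s^{-1}) = s\, r(\delta)\, s^{-1}$ for all $s \in \Sigma$, $\delta \in \Lambda$.

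Next I would enlarge $\Lambda$ to bring $\Sigma$ inside. Because $\Sigma$ normalises $\Lambda$ and $\Sigma \cap \Lambda$ has finite index in $\Sigma$ (both sit in $\Stab_\Gamma(H)$, and $\Lambda \cap \Stab_\Gamma(H)$ has finite index there), the subgroup $\Delta' := \langle \Lambda, \Sigma \rangle = \Lambda \rtimes (\Sigma / (\Sigma \cap \Lambda))\cdot\Lambda$ — more precisely $\Lambda \Sigma$ — is itself a finite-index subgroup of $\Gamma$ containing $\Sigma$, and by construction $\Delta' \subseteq \langle \Gamma', \Sigma\rangle$ since $\Lambda \subseteq \Gamma' $. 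The $\Sigma$-equivariance of $r$ is exactly what is needed to extend $r$ from $\Lambda$ to $\Delta' = \Lambda\Sigma$: define $r'(\delta s) = r(\delta)\, s$ for $\delta \in \Lambda$, $s \in \Sigma$; equivariance guarantees this is a well-defined homomorphism (one checks $r'((\delta_1 s_1)(\delta_2 s_2)) = r'(\delta_1 (s_1 \delta_2 s_1^{-1}) s_1 s_2) = r(\delta_1) s_1 r(\delta_2) s_1^{-1} s_1 s_2 = r(\delta_1)s_1 \, r(\delta_2) s_2$, using $s_1 r(\delta_2) s_1^{-1} = r(s_1 \delta_2 s_1^{-1})$), and it restricts to the identity on $\Stab_{\Delta'}(H)$, which I would identify with $\Sigma$: the inclusion $\Sigma \subseteq \Stab_{\Delta'}(H)$ is clear, and conversely any element of $\Delta'$ fixing $H$ lies in $r'(\Delta') = \Stab_{\Delta'}(H)$; controlling this requires checking $\Stab_{\Lambda}(H) \subseteq \Sigma$, which we may assume after possibly shrinking $\Lambda$ at the outset (intersecting with a deep enough congruence subgroup so that $\Stab_\Lambda(H) \subseteq \Sigma$, using that $\Sigma$ has finite index in $\Stab_\Gamma(H)$). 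Finally, if $\Gamma'$ and $\Sigma$ are torsion-free, then $\Lambda \subseteq \Gamma'$ is torsion-free, and $\Delta' = \Lambda\Sigma$ is torsion-free because it retracts onto $\Sigma$ with torsion-free kernel $\Lambda \cap \Ker r' \subseteq \Lambda$ — any torsion element would map to a torsion element of $\Sigma$, hence be trivial in the quotient, hence lie in $\Lambda$, hence be trivial.

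The main obstacle I anticipate is the bookkeeping around $\Stab_{\Delta'}(H)$: Lemma~\ref{lemma: symmetry of retraction} only guarantees that $\sigma$ normalises $\Delta''$ and that $r$ intertwines with $\sigma_*$, but it does not by itself pin down $\Stab_{\Delta'}(H)$ exactly equal to $\Sigma$ — a priori $\Stab_\Lambda(H)$ could be strictly larger than $\Sigma \cap \Lambda$, so $\Stab_{\Delta'}(H)$ could be larger than $\Sigma$. Resolving this cleanly is where care is needed: one wants to arrange, by intersecting $\Lambda$ with a suitable finite-index (e.g.\ congruence) subgroup of $\Gamma$ that is still normalised by $\Sigma$ — note $\Sigma$ normalises any characteristic-in-$\Gamma$, or more simply any congruence, subgroup — that $\Stab_\Lambda(H)$ becomes small enough that $\langle \Stab_\Lambda(H), \Sigma\rangle = \Sigma$, i.e.\ $\Stab_\Lambda(H) \subseteq \Sigma$. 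Since $\Sigma$ has finite index in $\Stab_\Gamma(H)$, a deep enough such subgroup works. A secondary point to be careful about is that each application of the Lemma~\ref{lemma: symmetry of retraction} construction passes to a new retraction on a smaller group, so one must confirm the $\sigma_i$-equivariance established at earlier stages survives later shrinking — it does, because the later $\Delta''$ are subgroups on which the earlier equivariant $r$ restricts, and restriction preserves the intertwining identity, but this needs to be stated explicitly. Once these finite-index adjustments are in place, the verification that $r'$ is a well-defined retraction and that torsion-freeness is inherited is routine.
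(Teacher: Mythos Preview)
Your outline is essentially the paper's proof: apply Lemma~\ref{lemma: symmetry of retraction} once for each generator of~$\Sigma$ to obtain a finite-index $\Lambda \subseteq \Gamma''$ normalised by~$\Sigma$ and carrying a $\Sigma$-equivariant retraction~$r$, set $\Delta' = \Lambda\Sigma$, define $r'(\delta s) = r(\delta)\,s$, and derive torsion-freeness from the short exact sequence $1 \to \Ker r' \to \Delta' \to \Sigma \to 1$ together with $\Ker r' \subseteq \Lambda \subseteq \Gamma'$. Your well-definedness computation for $r'$ and your torsion argument are verbatim those of the paper.

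The one place where you diverge is the step forcing $\Stab_\Lambda(H) \subseteq \Sigma$. You propose intersecting $\Lambda$ with a deep congruence subgroup~$N$; the paper instead replaces $\Lambda$ by $r^{-1}(\Sigma)$. Your version has a gap: even if $N$ is $\Sigma$-normalised, there is no reason that $r(\Lambda \cap N) \subseteq N$, so $r$ need not restrict to a retraction of $\Lambda \cap N$ onto \emph{its own} $H$-stabiliser, and the equivariance you built would have to be re-established from scratch on the smaller group. The paper's choice avoids this entirely: $r^{-1}(\Sigma)$ is finite-index in~$\Lambda$ (since $\Sigma \cap \Stab_\Lambda(H)$ has finite index in $\Stab_\Lambda(H)$), it is $r$-invariant because $r$ is idempotent, it is $\Sigma$-normalised because $r(\sigma\delta\sigma^{-1}) = \sigma r(\delta)\sigma^{-1} \in \Sigma$, and any $\delta \in r^{-1}(\Sigma)$ stabilising $H$ satisfies $\delta = r(\delta) \in \Sigma$. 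Swap your congruence-subgroup step for this preimage and the ``main obstacle'' you anticipated disappears; the rest of your argument then goes through exactly as written.
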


\begin{proof}
	Let us note that the group \(\Stab_\Gamma(H)\), and hence \(\Sigma\), is finitely generated. Let \(\sigma_1, \dots, \sigma_k\) denote the generators of \(\Sigma\). According to Theorem~\ref{theorem: BHW} there is a finite-index subgroup \(\Gamma'' \subset \Gamma'\) that retracts to \(\Stab_{\Gamma''}(H)\). Let \(\Delta_i\), \(\Delta_0\), and \(\Delta_{-i}\) denote the subgroups of \(\Gamma''\) that comes from Lemma~\ref{lemma: symmetry of retraction} applied to \(\sigma_i\), \(\operatorname{id}\), and \(\sigma_i^{-1}\) respectively. Let \(\Delta\) denote the intersection of these subgroups. Then
	\begin{itemize}
		\item \(\Delta\) is a finite-index subgroup of \(\Gamma\);
		\item \(\Delta\) is normalised by \(\Sigma\);
		\item there exists a retraction \(r \colon \Delta \to \Stab_{\Delta}(H)\) such that \[r(\sigma \delta \sigma^{-1}) = \sigma r(\delta) \sigma^{-1} \quad \text{for all } \sigma \in \Sigma \text{ and } \delta \in \Delta.\]
	\end{itemize}
	Since \(\Sigma \cap \Stab_\Delta(H)\) is a finite-index subgroup of \(\Stab_\Delta(H)\), the subgroup \(r^{-1}(\Sigma)\) has finite index in \(\Delta\). We now replace \(\Delta\) by \(r^{-1}(\Sigma)\). The desired group \(\Delta'\) is equal to \(\Delta \Sigma\) and the desired retraction is \[r' \colon \Delta' \to \Stab_{\Delta'}(H), \quad \delta \sigma \mapsto r(\delta) \sigma.\] The retraction is well-defined as 
	\begin{multline*}
		r'(\delta_1 \sigma_1 \, \delta_2 \sigma_2) = 
		r'\!\left( \delta_1 (\sigma_1 \delta_2 \sigma_1^{-1}) \; \sigma_1 \sigma_2 \right) = 
		r\!\left( \delta_1 (\sigma_1 \delta_2 \sigma_1^{-1})\right) \sigma_1 \sigma_2 \\ = 
		r(\delta_1) \, r(\sigma_1 \delta_2 \sigma_1^{-1}) \, \sigma_1 \sigma_2 = 
		r(\delta_1) \sigma_1 \, r(\delta_2) \sigma_2 = 
		r'(\delta_1 \sigma_1) \, r'(\delta_2 \sigma_2).
	\end{multline*}
	And \(\Stab_{\Delta'}(H) = \Sigma\), as if \(\delta \sigma \in \Stab_{\Delta'}(H)\), then \(\delta \in \Stab_\Delta(H) \subseteq \Sigma\) and \(\delta \sigma \in \Sigma\).
	
	Let us now assume that both \(\Gamma'\) and \(\Sigma\) are torsion-free. Note that the following sequence is exact 
	\begin{displaymath}\begin{tikzcd}
		1 \arrow[r] & \Ker r' \arrow[r] & \Delta \Sigma \arrow[r, "r'"] & \Sigma \arrow[r] & 1.
	\end{tikzcd}\end{displaymath}
	If \(r'(\delta \sigma) = r(\delta) \sigma = 1\), then \(\sigma^{-1} = r(\delta) \in \Delta\), which implies \(\sigma \in \Delta\) and \(\delta \sigma \in \Delta\). Thus, \(\Ker r' \subseteq \Delta\). Assume that \(\Delta'\) is not torsion-free. This means that \((\delta \sigma)^m = 1\) for some \(\delta \in \Delta'\), \(\sigma \in \Sigma\), and \(m > 0\). Therefore, \(r'\!\left( (\delta \sigma)^m \right) = r'(\delta \sigma)^m = 1\), but \(r'(\delta \sigma) \in \Sigma\). As \(\Sigma\) is torsion-free, \(r'(\delta \sigma) = 1\) and \(\delta \sigma \in \Ker r' \subseteq \Delta\). Thus, \((\delta \sigma)^m \ne 1\), as \(\Delta \subseteq \Gamma'' \subseteq \Gamma'\) is torsion-free.
\end{proof}

Finally, to prove Lemma~\ref{lemma: inductive step}, we use the notions introduced at the beginning of this section. Let us apply the theorem above to \(\Gamma = \Gamma^n\), \(\Sigma = \Gamma^{n - 1}_1\), and \(\Gamma^n_1 \subseteq \Gamma^n\). The resulting subgroup \(\Delta' = \Gamma^n_1\) satisfies the required conditions.

\bibliographystyle{alpha}
\bibliography{refs.bib}

\end{document}